\newtheorem{theorem}{Theorem}[section]
\newtheorem{proposition}[theorem]{Proposition}
\newtheorem{lemma}[theorem]{Lemma}
\newtheorem{conjecture}[theorem]{Conjecture}
\begin{document}
	
	\title[Irredundant bases of primitive groups]{Cardinalities of irredundant bases of finite primitive groups}

	\author[F. Mastrogiacomo]{Fabio Mastrogiacomo}
	\address{Fabio Mastrogiacomo, Dipartimento di Matematica ``Felice Casorati", University of Pavia, Via Ferrata 5, 27100 Pavia, Italy} 
	\email{fabio.mastrogiacomo01@universitadipavia.it}
	\keywords{base size, irredundant base, almost simple, Lie group, primitive group}        
	%\thanks{The author is a member of the GNSAGA INdAM research group and kindly acknowledges their support.}
	%\thanks{The author is grateful to Pablo Spiga for his valuable hints.}
	\maketitle
	\begin{abstract}        Let $G$ be a finite permutation group acting on a set $\Omega$. An ordered sequence 
		$(\omega_1,\ldots,\omega_\ell)$ of elements of $\Omega$ is an irredundant base for $G$ if the pointwise stabilizer of the sequence is trivial and no point is fixed by the stabilizer of its predecessors. We show that any interval of natural numbers can be realized as the set of cardinalities of irredundant bases for some finite primitive group.
	\end{abstract}

	\section{Introduction}
	Let $G$ be a permutation group on $\Omega$. 
	Given an ordered sequence $(\omega_1,\ldots,\omega_\ell)$ of elements of $\Omega$, we study the associated stabilizer chain:
	$$G\ge G_{\omega_1}\ge G_{\omega_1,\omega_2}\ge \cdots \ge G_{\omega_1,\omega_2,\ldots,\omega_\ell}.$$
	If all the inclusions given above are strict, then the stabilizer chain is called \textit{\textbf{irredundant}}. Furthermore, if the group $G_{\omega_1,\ldots,\omega_\ell}$ is the identity, then the sequence $(\omega_1,\ldots,\omega_\ell)$ is called an \textit{\textbf{irredundant base}}. The size of the smallest possible base is denoted by $b(G,\Omega)$, while the size of the longest possible irredundant base is denoted by $I(G,\Omega)$. The term \emph{irredundant} here is used to emphasize the distinction from another type of bases, the minimal bases (see \cite{DMS} for more details). 
	
	In this paper, given a finite permutation group $G$ with domain $\Omega$, we are interested in the following subset of integers associated to $G$ arising from irredundant bases. We let
	\begin{align*}
		\mathcal{I}(G,\Omega):=\{\ell\in\mathbb{N}\mid &\exists \omega_1,\ldots,\omega_\ell\in \Omega\hbox{ such that }\\&(\omega_1,\ldots,\omega_\ell)\ \hbox{ is an irredundant base}\}.	
	\end{align*}
	The first result about this set is due to Cameron, who has proved that for any permutation group $G$ acting on $\Omega$, the set $\mathcal{I}(G,\Omega)$ is an interval of natural numbers.
	In \cite{DMS}, the authors investigate which interval 
	can appear as $\mathcal{I}(G,\Omega)$ for some finite permutation group $G$. They further inquire whether this group can be taken to be transitive or primitive. Concerning the first case, \cite[Theorem 1.3]{DMS} provides a complete answer. Specifically, it is shown that for any interval $X$ of natural numbers (not containing the number 1)\footnote{If $1 \in X$ and if $G$ is transitive, then $G$ is regular, so that $\mathcal{I}(G,\Omega) = \{1\}$. }, there exists a finite transitive permutation group $G$ acting on $\Omega$ such that
	\[
		\mathcal{I}(G,\Omega)= X.
	\]
	The construction is explicit and involves the product action of finitely many symmetric groups in their natural actions. Thus, the obtained group is transitive but imprimitive.\\ For what regards the primitive case, they have shown that there exists an interval which is never realized by a symmetric group $\mathrm{Sym}(n)$ in its natural action on the collection of $k$-subsets, with $k \leq n/2$. Motivated by this, they have proposed the following conjecture.
	\begin{conjecture}\cite[Conjecture $5.1$]{DMS}\label{conj1}
		There exists an interval  $X$ of positive integers, not containing $1$, such that no primitive group $G$ acting on some $\Omega$ satisfies 
		\[
			\mathcal{I}(G,\Omega)=X.
		\]
	\end{conjecture}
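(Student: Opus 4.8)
The conjecture turns out to be \emph{false}, so what I would actually set out to prove is its negation: every interval $X$ of positive integers with $X=\{1\}$ or $1\notin X$ equals $\mathcal I(G,\Omega)$ for some finite primitive $G$ (an interval with $1\in X$ and $|X|\ge 2$ genuinely cannot occur, since $b(G,\Omega)=1$ forces $G$ regular, hence of prime order with $\mathcal I=\{1\}$; this leaves exactly the intervals the conjecture concerns). By the theorem of Cameron quoted above, $\mathcal I(G,\Omega)$ is always the interval $[b(G,\Omega),I(G,\Omega)]$, so it suffices to realise each target $[a,b]$ by a primitive group with minimal base size $a$ and longest irredundant base of size $b$. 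Singletons $\{n\}$ are immediate from $\mathrm{Sym}(n+1)$ in its natural $2$-transitive action, where $b=I=n$. The engine for the rest is the elementary identity $\mathcal I(G,\Omega)=1+\mathcal I(G_\alpha,\Omega)$ for transitive $G$ — because $(\omega_1,\dots,\omega_\ell)$ is an irredundant base for $G$ exactly when $(\omega_2,\dots,\omega_\ell)$ is one for $G_{\omega_1}$ — which for an affine primitive group $G=V\rtimes H$ with $H\le\mathrm{GL}(V)$ irreducible reads $\mathcal I(G,V)=1+\mathcal I(H,V\setminus\{0\})$. Everything is thereby reduced to irreducible linear groups acting on their nonzero vectors, where the minimum (how many vectors are needed to trivialise the group) and the maximum (how slowly a stabiliser chain can descend) can be engineered separately.

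For $a\ge 3$ I would take $G=\mathrm{A}\Gamma\mathrm{L}_m(q)=V\rtimes\Gamma\mathrm{L}_m(q)$ with $V=\mathbb{F}_q^m$, $q=p^f$, and prove $\mathcal I(\Gamma\mathrm{L}_m(q),V\setminus\{0\})=[\,m+1,\ m+\Omega(f)\,]$, where $\Omega(f)$ counts prime factors of $f$ with multiplicity; hence $\mathcal I(G,V)=[\,m+2,\ m+1+\Omega(f)\,]$, and one chooses $m=a-2$ and $f$ with $\Omega(f)=b-a+1$. For the lower endpoint, $m$ vectors with trivial stabiliser would have to form a basis, but the stabiliser of a basis is isomorphic to the Galois group $C_f\neq 1$, and a basis together with one vector having a coordinate of degree $f$ does kill it. For the upper endpoint, factor through the normal subgroup $\mathrm{GL}_m(q)$ of $\Gamma\mathrm{L}_m(q)$, whose quotient is $C_f$: along any irredundant base $(\omega_1,\dots,\omega_\ell)$, the $\mathrm{GL}_m(q)$-part of the stabiliser is the pointwise stabiliser of $\langle\omega_1,\dots,\omega_i\rangle$ and strictly shrinks only when that span grows (so at most $m$ times), while the image in $C_f$ descends a subgroup chain of $C_f$ (so drops properly at most $\Omega(f)$ times), and every irredundant step must do at least one of the two. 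The matching long base is the explicit sequence $(e_1,\lambda_1 e_1,\dots,\lambda_{\Omega(f)}e_1,e_2,\dots,e_m)$ where $\deg\lambda_1\mid\deg\lambda_2\mid\cdots\mid\deg\lambda_{\Omega(f)}=f$ is a maximal chain of divisors.

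The case $a=2$, $b\ge 3$ needs a different module, because in odd characteristic no $\mathrm{GL}$-type group has base size $1$ on $V\setminus\{0\}$. Here I would take $H=2^{1+2(b-2)}_{+}$, the extraspecial $2$-group of $+$-type, acting on its faithful irreducible $2^{b-2}$-dimensional module $V$ over a large odd prime field $\mathbb{F}_q$, giving a primitive affine group with $\mathcal I(H,V\setminus\{0\})=[\,1,\ b-1\,]$. The minimum is $1$: when $q$ is large, the nonzero vectors lying in the fixed space of some involution of $H$ (each such space being proper) do not cover $V\setminus\{0\}$, so $H$ has a regular orbit. The maximum is $b-1$: since $H'=Z(H)=\langle -\mathrm{id}\rangle$ acts with no nonzero fixed points, every nontrivial point stabiliser is elementary abelian and embeds as a totally singular subspace of $H/Z(H)$ for its quadratic form, which in the $+$-type has dimension at most $b-2$; conversely one attains length $b-1$ by starting $\omega_1$ in a weight space of a maximal elementary abelian subgroup (so $H_{\omega_1}\cong C_2^{\,b-2}$) and then peeling off one involution at a time through successive weight spaces of the shrinking stabiliser. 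Combined with the singletons (which handle $\{1\}$ and $\{2\}$), this exhausts every admissible interval.

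I expect the cleanest parts to be the two reductions and the $\Gamma\mathrm{L}_m$ chain bounds; the real care will go into the extraspecial case — producing the faithful irreducible module over a fixed small odd prime, and a regular orbit on it, uniformly in $m$ — and into ruling out degenerate small configurations, for instance $\mathbb{F}_3^2\rtimes D_8$, where $D_8$ is already regular on all eight nonzero vectors and $\mathcal I$ collapses to $\{2\}$; this is exactly why one must keep $q$ large. One should also verify that these affine constructions are primitive, i.e. that the relevant linear group is irreducible — immediate for $\Gamma\mathrm{L}_m(q)$ and standard for the extraspecial module.
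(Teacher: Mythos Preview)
Your proposal is correct and, like the paper, disproves the conjecture rather than proving it. For singletons you both use $\mathrm{Sym}(n+1)$, and for $a\ge 3$ you both use the affine semilinear group $\mathrm{A}\Gamma\mathrm{L}_m(q)$ on its natural module; your upper-bound argument (the $\mathrm{GL}$--part can drop at most $m$ times, the Galois image at most $\Omega(f)$ times) is exactly the mechanism behind the paper's Proposition~\ref{affine}. One point worth flagging: your base-size claim $b(\mathrm{A}\Gamma\mathrm{L}_m(q),V)=m+2$ disagrees with the paper's $d+1$, and yours is the correct one --- the pointwise stabiliser in $\Gamma\mathrm{L}_m(q)$ of \emph{any} ordered $\mathbb{F}_q$-basis is a complement to $\mathrm{GL}_m(q)$, hence isomorphic to $C_f$ and nontrivial when $f>1$. (Concretely, for the paper's sequence $(0,e_1,\dots,e_{d-1},\mu e_{d-1}+e_d)$ and each $\varphi\in\mathrm{Aut}(\mathbb{F}_q)$ there is a matching $A$ with $e_d\mapsto (\mu-\mu^{\varphi})e_{d-1}+e_d$.) Your parameter choice $m=a-2$, $\Omega(f)=b-a+1$ silently repairs this and still yields every interval with $a\ge 3$.

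The genuine divergence is the case $a=2$. The paper passes to an almost simple family: $G=\mathrm{Aut}({}^2B_2(q))$ acting on unordered pairs from the Suzuki ovoid, where $b(G,\Omega)=2$ comes from $b(G,\Delta)=3$ and the length $3+\pi(f)$ is obtained by first exhibiting an explicit irredundant base of length $3$ for the socle and then exhausting the field-automorphism factor. Your route stays affine, using $V\rtimes 2^{1+2(b-2)}_{+}$ over a large odd prime field. Both are valid; what each buys is different. The Suzuki construction needs no ``$q$ large'' counting and produces almost simple primitive witnesses, at the cost of invoking nontrivial facts about ${}^2B_2(q)$ (its $2$-transitivity, maximal subgroups, and base size on $\Delta$). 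Your extraspecial construction is self-contained and elementary --- the representation theory of $2^{1+2m}_{+}$ and a fixed-point count --- but requires choosing $q$ large enough to guarantee a regular orbit, a caveat you already identify with the $\mathbb{F}_3^2\rtimes D_8$ example. The totally-singular bound $|H_v|\le 2^{m}$ and the weight-space chain give a clean two-line proof of $I=m+1$, which is arguably tidier than the explicit coordinate computations in the Suzuki case.
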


	Here, we disprove this conjecture. Our main result is the following.
	\begin{theorem}\label{mainthm}
		Let $X$ be an interval of natural numbers, not containing $1$. Then, there exists a primitive permutation group $G$ acting on $\Omega$ such that
		\[
			X = \mathcal{I}(G,\Omega).
		\]
	\end{theorem}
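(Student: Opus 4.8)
The plan is to use Cameron's theorem that $\mathcal{I}(G,\Omega)$ is always an interval: it then suffices, given a (finite) interval $X=\{m,m+1,\dots,M\}$ with $m\ge 2$, to exhibit a primitive $G$ on $\Omega$ with $b(G,\Omega)=m$ and $I(G,\Omega)=M$, for then $\mathcal{I}(G,\Omega)=X$ automatically. I would look for $G$ among the affine primitive groups $V\rtimes H$ on $V=\mathbb F_p^{\,n}$ with $H\le\mathrm{GL}(V)$ irreducible (which is exactly the primitivity condition). Translating any base so that its first point is $0$ shows that the remaining points form an irredundant base for the linear action of $H$, whence $\mathcal{I}(V\rtimes H,V)=1+\mathcal{I}(H,V)$; and since a strictly decreasing chain of stabilizers in a linear group forces the chosen vectors to be $\mathbb F_p$-linearly independent, $I(H,V)\le n$. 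So the task reduces to realizing every interval $\{m-1,\dots,M-1\}$ with $m-1\ge 1$ as $\mathcal{I}(H,V)$ for some irreducible linear group $H$.

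For $m\ge 3$ I would take $a=m-2$, $q=p^{e}$ with $e=2^{M-m+1}$, and let $H=\Gamma\mathrm L(a,q)$ act on $V=\mathbb F_q^{\,a}=\mathbb F_p^{\,ae}$; since $H$ contains the scalars $\mathbb F_q^{\times}$ together with $\mathrm{GL}(a,q)$ it is irreducible over $\mathbb F_p$, so $G=\mathrm A\Gamma\mathrm L(a,q)$ is primitive. I then need two facts. First, $b(H,V)=a+1$: the standard basis $e_1,\dots,e_a$ has joint stabilizer equal to the Frobenius subgroup $\langle\phi\rangle\cong C_e$ (any matrix fixing all $e_k$ is the identity), and any $\mathbb F_q$-spanning set has a stabilizer surjecting onto $C_e$, so no $a$ vectors form a base; but $e_1,\dots,e_a$ together with one vector having a coordinate that generates $\mathbb F_q/\mathbb F_p$ do. Second, $I(H,V)=a+\Omega(e)$, where $\Omega$ counts prime factors with multiplicity. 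For the upper bound I would use the normal series $\mathrm{GL}(a,q)\trianglelefteq\Gamma\mathrm L(a,q)$ with cyclic quotient $C_e$: along a stabilizer chain, the intersection with $\mathrm{GL}(a,q)$ is the pointwise stabilizer of the $\mathbb F_q$-span of the chosen vectors and so strictly drops at most $a$ times, while the image in $C_e$ strictly drops at most $\Omega(e)$ times, and comparing orders shows that every strict step of the chain must strictly drop one of the two. For the lower bound, take $e_1,\dots,e_a$ (reaching $\langle\phi\rangle=C_e$) followed by $\Omega(e)$ vectors whose entries generate the successive subfields along a maximal chain $\mathbb F_p\subset\mathbb F_{p^{p_1}}\subset\mathbb F_{p^{p_1p_2}}\subset\cdots\subset\mathbb F_q$; this cuts $C_e$ down one prime at a time. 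Since $\Omega(2^{M-m+1})=M-m+1$, we get $\mathcal{I}(H,V)=\{a+1,\dots,a+\Omega(e)\}=\{m-1,\dots,M-1\}$, hence $\mathcal{I}(G,V)=X$.

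For the remaining case $m=2$ I would instead take $H=\mathrm{Sym}(M)$ acting on its fully deleted permutation module $V=\{x\in\mathbb F_p^{\,M}:\sum_i x_i=0\}$ over a prime $p\nmid M$ (with $p$ odd if $M=2$, or simply use $\mathrm{AGL}(1,p)$ there): this module is irreducible, so $G=V\rtimes\mathrm{Sym}(M)$ is primitive, a vector with pairwise distinct coordinates is a base (so $b=1$), and a suitable sequence of vectors realizes the chain $\mathrm{Sym}(M)>\mathrm{Sym}(M-1)>\cdots>1$ (so $I=M-1$), giving $\mathcal{I}(H,V)=\{1,\dots,M-1\}$ and $\mathcal{I}(G,V)=\{2,\dots,M\}$. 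The main obstacle, I expect, is the pair of computations for $\Gamma\mathrm L(a,q)$: establishing the exact minimal base size against the persistent Frobenius cyclic group, and proving the sharp equality $I=a+\Omega(e)$ uniformly in $a$ and $e$ rather than merely bounding it—both directions must be controlled simultaneously, and the upper bound in particular relies on the clean interaction between stabilizer chains and the extension $\mathrm{GL}(a,q)\trianglelefteq\Gamma\mathrm L(a,q)$.
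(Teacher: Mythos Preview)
For $m\ge 3$ your construction agrees with the paper's: both take $\mathrm{A\Gamma L}_a(q)$ on $\mathbb{F}_q^{\,a}$ and compute $I$ by first exhausting $\mathrm{GL}_a(q)$ with the standard basis and then descending through a maximal subfield chain in $\mathrm{Gal}(\mathbb{F}_q/\mathbb{F}_p)$; your normal-series argument for the matching upper bound on $I$ is a tidy way to make this sharp. Your computation that $b(\mathrm{A\Gamma L}_a(q),\mathbb{F}_q^{\,a})=a+2$, on the grounds that the $\Gamma\mathrm L$-stabilizer of \emph{any} $\mathbb{F}_q$-spanning $a$-tuple surjects onto $\mathrm{Gal}$, is correct and in fact more careful than the paper's corresponding proposition, which asserts $b=d+1$ via the sequence $(0,e_1,\dots,e_{d-1},\mu e_{d-1}+e_d)$: that sequence actually has stabilizer of order $f$ (for each $\varphi\in\mathrm{Gal}$ pair it with the transvection $I+(\mu-\mu^{\varphi})E_{d-1,d}$), so your shift $a=m-2$ is the right indexing. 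Where you genuinely diverge is $m=2$. The paper handles this case with $\mathrm{Aut}({}^2B_2(q))$ acting on $2$-subsets of the Suzuki ovoid and must invoke structural results on Suzuki groups together with the Burness--Liebeck--Shalev base-size theorem, whereas your affine group $V\rtimes\mathrm{Sym}(M)$ on the deleted permutation module is entirely elementary: irreducibility is classical, the Young chain $\mathrm{Sym}(M)>\mathrm{Sym}(M-1)>\cdots>1$ gives $I(\mathrm{Sym}(M),V)\ge M-1$, your linear-independence observation gives $I\le\dim_{\mathbb{F}_p}V=M-1$, and a vector with pairwise distinct coordinates gives a base of size $1$ for the point stabilizer. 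One small repair: to guarantee such a regular vector inside $V$ you need $p>M$, not merely $p\nmid M$, so that $\mathbb{F}_p$ actually contains $M$ distinct elements (which one may then translate to have zero sum).
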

	As in the transitive case, the construction is explicit and quite uniform. Indeed, we only need to distinguish whether the smallest element of the interval $X$ is $2$ or greater than $2$. \\
	The proof, based on the same idea of \cite[Example $5.1$]{GL}, is in the subsequent section, and is organized as follows.\\ Firstly, we recall some basic properties of the Suzuki groups, and we compute the interval $\mathcal{I}$ for the automorphism group of $\,^2B_2(q)$ in its action on the collection of $2$-subsets of the Suzuki ovoid. This will be used for the case where the smallest element of $X$ is $2$. For the remaining cases, we compute the interval $\mathcal{I}$ for the automorphism group of the general affine group in its natural action on its underlying vector space. \\
	
	\noindent {\bf Acknowledgements.} {\sl The author is grateful to Pablo Spiga for many helpful discussions.\\
	\noindent The author is a member of the GNSAGA INdAM research group and kindly acknowledges their support.}
		
	\section{Proof of Theorem~\ref{mainthm}}
	We start by making some basic constructions that will help us reach our conclusion.
	
	Take $G_0 = \,^2B_2(q)$, with $q=2^{2m+1}$, and $G = G_0 \rtimes \mathrm{Aut}(\mathbb{F}_q)$. Set $f=2m+1$. We first recall the definition of the group $G_0$ (following \cite{dixon_mortimer}) with some basic properties of $G_0$ and $G$. Define
	\[
		\Delta = \{(\eta_1,\eta_2,\eta_3) \in \mathbb{F}_q^3 \, | \, \eta_3 = \eta_1 \eta_2 + \eta_1^{\sigma+2}+\eta_2^{\sigma}\} \cup \{\infty\},
	\]
	where $\sigma \in \mathrm{Aut}(\mathbb{F}_q)$ is the automorphism defined by $\xi \mapsto \xi^{2^{m+1}}$. This set is usually referred to as the \emph{Suzuki ovoid}, and has cardinality $|\Delta| = q^2+1$.\\
	For $\alpha,\beta,\gamma \in \mathbb{F}_q$, with $\gamma \neq 0$, define the following permutations of $\Delta$ fixing $\infty$
	\begin{align*}
			t_{\alpha,\beta}:(\eta_1,\eta_2,\eta_3) \mapsto& (\eta_1+\alpha,\eta_2+\beta+\beta^\sigma \eta_1, \\
			&\eta_3+\alpha\beta + \alpha^{\sigma+2}+\beta^\sigma+\alpha\eta_2+\alpha^{\sigma+1}\eta_1+\beta\eta_1), \\
			n_\gamma: (\eta_1,\eta_2,\eta_3) \mapsto&(\gamma \eta_1, \gamma^{\sigma+1}\eta_2,\gamma^{\sigma+2}\eta_3).
	\end{align*}
	Next, define the involution $w$ by
	\begin{align*}
		w : (\eta_1,\eta_2,\eta_3) &\leftrightarrow \left(\frac{\eta_2}{\eta_3},\frac{\eta_1}{\eta_3},\frac{1}{\eta_3}\right)\, \text{ for } \eta_3 \neq 0, \\
		\infty &\leftrightarrow (0,0,0).
	\end{align*}
	We now define the group $G_0 = \,^2B_2(q)$ as the group generated by $w$ and all the permutations $t_{\alpha,\beta}$ and $n_\gamma$.
	In \cite{Suz}, it is proved that $G_0$ acts doubly transitive on $\Delta$. Moreover, from \cite[Theorem 3]{BLS} we see that $b(G,\Delta) = 3$.\\
	Consider now $$\Omega = \{ \omega \subseteq \Delta \, : \, |\omega| = 2\}.$$ Since $G_0$ is doubly transitive on $\Delta$, $G_0$ acts transitively on $\Omega$. Actually, it turns out that this action is primitive. Indeed, the stabilizer of a point of $\Omega$ is the dihedral group of order $2(q+1)$, and it is shown in \cite{Suz} that this is a maximal subgroup of $G_0$. Finally, we can extend both the actions of $G_0$ on $\Delta$ and $\Omega$ to actions of $G$ on these sets, and still obtain primitive actions. 

	\begin{lemma}\label{suzzu0}
		The base size of $G$ in its action on $\Omega$ is $2$, that is $b(G,\Omega) = 2$.
	\end{lemma}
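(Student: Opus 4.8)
The plan is to establish the inequalities $b(G,\Omega)\ge 2$ and $b(G,\Omega)\le 2$ separately. The lower bound is immediate: for a point $\omega=\{\delta,\delta'\}\in\Omega$ the stabilizer $G_\omega$ contains the pointwise stabilizer of $\delta$ and $\delta'$, which is nontrivial, so no single element of $\Omega$ is a base.

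For the upper bound I would combine $b(G,\Delta)=3$ with the observation that every two-point stabilizer of $(G,\Delta)$ has \emph{odd} order. Indeed, $G=G_0\rtimes\mathrm{Aut}(\mathbb{F}_q)$ is $2$-transitive on $\Delta$ with $|\Delta|=q^2+1$, and $|G|=q^2(q^2+1)(q-1)f$; applying orbit--stabilizer to the action on ordered pairs of distinct points yields $|G_{\delta_1,\delta_2}|=(q-1)f$ for all distinct $\delta_1,\delta_2\in\Delta$, which is odd since $q$ is even and $f=2m+1$. Hence $G_{\delta_1,\delta_2}$ has no element of even order, and in particular no element interchanging a pair of points of $\Delta$. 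Now choose $\delta_1,\delta_2,\delta_3\in\Delta$ with $G_{\delta_1,\delta_2,\delta_3}=1$ (possible as $b(G,\Delta)=3$; these are necessarily distinct, since $b(G,\Delta)>2$) and set $\omega_1=\{\delta_1,\delta_2\}$. The set $S$ of elements of $G$ interchanging $\delta_1$ and $\delta_2$ is a single coset of $G_{\delta_1,\delta_2}$, so $|S|=(q-1)f<q^2$ (recall $q=2^f$, whence $f<q$). Therefore $\{\delta_3\}\cup\{g\delta_3:g\in S\}$ is a proper subset of $\Delta$, and I may pick $\delta_4\in\Delta$ outside it; put $\omega_2=\{\delta_3,\delta_4\}$, a genuine $2$-subset distinct from $\omega_1$.

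It remains to verify that $G_{\omega_1,\omega_2}=1$, which I would do by case analysis on how $g\in G_{\omega_1,\omega_2}$ acts on the two-element sets $\{\delta_1,\delta_2\}$ and $\{\delta_3,\delta_4\}$. If $g$ fixes both sets pointwise, then $g\in G_{\delta_1,\delta_2,\delta_3}=1$. If $g$ fixes one of them pointwise and interchanges the points of the other, then $g$ lies in a two-point stabilizer of $(G,\Delta)$ yet has even order, which is impossible. Finally, if $g$ interchanges $\delta_1$ with $\delta_2$ and also $\delta_3$ with $\delta_4$, then $g\in S$ and $g\delta_3=\delta_4$, contradicting the choice of $\delta_4$. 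Hence $(\omega_1,\omega_2)$ is a base and $b(G,\Omega)\le 2$, so together with the lower bound $b(G,\Omega)=2$.

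The only step requiring genuine care is the claim that two-point stabilizers of $(G,\Delta)$ have odd order: this uses the order of $\,^2B_2(q)$ together with the fact, recalled before the statement, that $\mathrm{Aut}(\mathbb{F}_q)$ extends the action of $G_0$ on $\Delta$, so that $|G|=|G_0|\cdot f$ with $f$ odd; the remaining arguments are elementary counting and the short case analysis above.
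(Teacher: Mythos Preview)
Your proof is correct but takes a more elaborate route than the paper's. The paper simply chooses $\omega_1=\{(0,0,0),\infty\}$ and, using $b(G,\Delta)=3$ together with $2$-transitivity, picks $\delta\in\Delta$ with $G_{(0,0,0),\infty,\delta}=1$, setting $\omega_2=\{\infty,\delta\}$. Because $\omega_1\cap\omega_2=\{\infty\}$, any $g\in G_{\omega_1,\omega_2}$ must send $\infty$ into $\omega_1\cap\omega_2$, hence fix $\infty$, and therefore fix $(0,0,0)$ and $\delta$ as well; thus $G_{\omega_1,\omega_2}=G_{(0,0,0),\infty,\delta}=1$. This single observation replaces your parity argument on $|G_{\delta_1,\delta_2}|=(q-1)f$, the counting bound $(q-1)f<q^2$ needed to produce $\delta_4$, and the three-case analysis. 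Your approach has the merit of working with disjoint $2$-subsets and of isolating a structural fact (two-point stabilizers in $(G,\Delta)$ have odd order) that is reusable elsewhere; the paper's approach, by allowing the two $2$-subsets to overlap in a single point, trades that generality for a much shorter argument.
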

	\begin{proof}
		Take $\omega_1 = \{(0,0,0),\infty\}$. Since $b(G,\Delta) = 3$, there exists $ \delta= (\eta_1,\eta_2,\eta_3) \in \Delta$ such that $G_{(0,0,0),\infty,\delta} = 1$ (we are using the $2$-transitivity of $G$ on $\Delta$ here). Take $\omega_2 = \{\infty, \delta\}$. Thus, $G_{\omega_1,\omega_2} = G_{(0,0,0),\infty,\delta} = 1$.
	\end{proof}
	Observe that, since $b(G_0,\Omega) \leq b(G,\Omega)$, we also have $b(G_0,\Omega)=2$.\\
	The following lemma is a basic observation made in \cite{LS}.
	\begin{lemma}
		Let $G$ be a primitive almost simple group with $b(G,\Omega) = 2$. Then $G$ admits an irredundant base of cardinality greater than $2$.
	\end{lemma}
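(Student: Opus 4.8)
The plan is to argue by contradiction at the one point where genuine group theory is needed, and otherwise proceed by formal manipulation of stabiliser chains. Fix a point $\omega_1 \in \Omega$ and set $H = G_{\omega_1}$. Since $b(G,\Omega) = 2 > 1$, no point stabiliser is trivial, so $H \neq 1$; since the action is primitive, $H$ is a maximal subgroup of $G$; and since the action is faithful, $\mathrm{core}_G(H) = 1$, so $H$ is not normal in $G$. Together with maximality this gives $N_G(H) = H$.

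The heart of the matter is to exhibit points $\omega_1,\omega_2 \in \Omega$ with
\[
  1 \lneq G_{\omega_1,\omega_2} \lneq G_{\omega_1}.
\]
Suppose no such pair exists. For $g \in G$ we have $G_{\omega_1}\cap (G_{\omega_1})^{g} = H \cap H^{g}$, and this equals $H$ exactly when $g \in N_G(H) = H$. Hence under our supposition $H \cap H^{g} = 1$ for every $g \notin H$. Reading this back on $\Omega \cong G/H$: only the identity fixes two distinct points, while $H \neq 1$ means the action is not regular, so $G$ acts on $\Omega$ as a Frobenius group. But a Frobenius group has a proper nontrivial nilpotent normal subgroup (its kernel, by Thompson's theorem), whereas every nontrivial normal subgroup of an almost simple group contains its nonabelian simple socle; this contradiction produces the desired $\omega_1,\omega_2$.

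It then remains to lengthen the irredundant chain $(\omega_1,\omega_2)$, for which $G \gneq G_{\omega_1} \gneq G_{\omega_1,\omega_2} \gneq 1$, into an irredundant base of cardinality at least $3$. This is the routine greedy step: whenever the current stabiliser $G_{\omega_1,\dots,\omega_k}$ is nontrivial, faithfulness of the action supplies a point $\omega_{k+1}$ it does not fix, so $G_{\omega_1,\dots,\omega_{k+1}} \lneq G_{\omega_1,\dots,\omega_k}$ and the chain stays irredundant; finiteness of $G$ forces termination at the identity, yielding an irredundant base of cardinality at least $3$. The only non-formal ingredient is the dichotomy in the second paragraph — ruling out that $H \cap H^{g}$ is always trivial or all of $H$ — and this is exactly where almost simplicity is used, through the non-existence of a Frobenius structure.
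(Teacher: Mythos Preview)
Your argument is correct and self-contained. The paper itself does not supply a proof of this lemma; it merely records it as ``a basic observation made in \cite{LS}'' and moves on. What you have written is the standard justification behind that observation: if no two-point stabiliser lies strictly between $1$ and a point stabiliser, then distinct conjugates of $H=G_{\omega_1}$ intersect trivially and the action is Frobenius; but an almost simple group cannot be Frobenius, since the Frobenius kernel is a nontrivial nilpotent normal subgroup (Thompson) and would therefore have to contain the nonabelian simple socle. The greedy extension of the resulting length-$2$ irredundant chain to an irredundant base of length at least $3$ is routine, exactly as you say. There is thus nothing in the paper to compare your approach against; your proof simply fills in what the paper leaves to the citation.
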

	In particular, $G$, in its action on $\Omega$, has an irredundant base of cardinality $3$. The next lemma shows that this is the maximum possible size.
	\begin{lemma}
		The size of the longest possible irredundant base for $G_0$ in its action on $\Omega$ is $3$, that is $I(G_0,\Omega) = 3$.
	\end{lemma}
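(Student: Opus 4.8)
The plan is to determine the point stabilizers for this action precisely enough that a short argument about subgroup chains finishes the job. Fix $\omega_1=\{\alpha,\beta\}\in\Omega$. The structural input I will use is classical for the Suzuki groups (see \cite{Suz}): $G_0$ is a Zassenhaus group on $\Delta$, that is, the stabilizer $T:=(G_0)_{\alpha,\beta}$ of the \emph{ordered} pair $(\alpha,\beta)$ is cyclic of order $q-1$ and acts \emph{semiregularly} on $\Delta\setminus\{\alpha,\beta\}$; moreover, since $G_0$ is $2$-transitive on $\Delta$ there is an element of $G_0$ interchanging $\alpha$ and $\beta$, so $T$ is a subgroup of index $2$ in the setwise stabilizer $(G_0)_{\omega_1}$. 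In particular $q-1$ is odd. (Both facts can be read off from the generators recalled above: $(G_0)_{\infty,(0,0,0)}=\langle n_\gamma\mid\gamma\in\mathbb{F}_q^{\times}\rangle$ is cyclic of order $q-1$, and $n_\gamma$ fixes a third point of $\Delta$ only when $\gamma=1$.)

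The crux is the following claim: \emph{for any two distinct points $\omega_1,\omega_2\in\Omega$ one has $|(G_0)_{\omega_1,\omega_2}|\le 2$.} It is enough to show $T\cap(G_0)_{\omega_2}=1$, for then $(G_0)_{\omega_1,\omega_2}=(G_0)_{\omega_1}\cap(G_0)_{\omega_2}$ is a subgroup of $(G_0)_{\omega_1}$ meeting the index-$2$ subgroup $T$ trivially, hence of order at most $2$. So let $g\in T$ stabilize $\omega_2=\{\gamma,\delta\}$ setwise; since $\omega_2\ne\omega_1$ we may assume $\gamma\notin\{\alpha,\beta\}$. If $g$ interchanged $\gamma$ and $\delta$, then $g^2\in T$ would fix $\gamma$, so $g^2=1$ by semiregularity, whence $g=1$ because $|T|$ is odd --- contradicting that $g$ moves $\gamma$. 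Therefore $g$ fixes $\gamma$, and semiregularity of $T$ forces $g=1$. This proves the claim.

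Granting the claim, the lemma follows quickly. Suppose toward a contradiction that $I(G_0,\Omega)\ge 4$, and choose an irredundant base $(\omega_1,\omega_2,\omega_3,\omega_4)$; writing $H_i=(G_0)_{\omega_1,\ldots,\omega_i}$, we obtain a chain of proper inclusions
\[
G_0> H_1> H_2> H_3> H_4=1.
\]
The inclusion $H_1> H_2$ forces $\omega_1\ne\omega_2$, so $|H_2|\le 2$ by the claim; but then $H_2> H_3> H_4=1$ is a strictly descending chain of length $2$ inside a group of order at most $2$, which is absurd. Hence $I(G_0,\Omega)\le 3$. On the other hand $G_0$ is primitive and almost simple on $\Omega$ with $b(G_0,\Omega)=2$, so the preceding lemma provides an irredundant base of cardinality greater than $2$; together with $I(G_0,\Omega)\le 3$ this gives $I(G_0,\Omega)=3$.

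The only step that takes any care is the structural input about the action of $G_0$ on $\Delta$ --- that $(G_0)_{\alpha,\beta}$ is cyclic of odd order $q-1$, acts semiregularly off $\{\alpha,\beta\}$, and has index $2$ in $(G_0)_{\{\alpha,\beta\}}$. These are standard facts about the Suzuki groups, and once they are available the combinatorial argument above is immediate.
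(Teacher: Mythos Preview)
Your proof is correct and follows essentially the same approach as the paper: both arguments establish the key claim that $|(G_0)_{\omega_1,\omega_2}|\le 2$ for any $\omega_2\ne\omega_1$ by showing that the cyclic pointwise stabilizer $T=(G_0)_{\alpha,\beta}$ meets $(G_0)_{\omega_2}$ trivially, and then conclude via the index-$2$ containment $T\le (G_0)_{\omega_1}$. The only difference is stylistic --- the paper verifies semiregularity of $T$ by an explicit coordinate computation with the elements $n_\gamma$, whereas you invoke the Zassenhaus property directly and use the oddness of $|T|=q-1$ to rule out an involution; the content is the same.
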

	\begin{proof}
		Set $\omega_1 = \{(0,0,0),\infty\}$, so that $(G_0)_{\omega_1} = D_{2(q-1)}$. Since $G_0$ admits an irredundant base of cardinality $3$, there exists $\omega_2 \in \Omega$ such that $G_{\omega_1} > G_{\omega_1,\omega_2} \neq 1$. Now define $$H = \{ g \in (G_0)_{(0,0,0),\infty}\, : \, \omega_2^g = \omega_2\}.$$
		We have $[(G_0)_{\omega_1,\omega_2}:H]\leq2$, since an element in $G_{\omega_1,\omega_2}$ can either fix $(0,0,0)$ and $\infty$ or swap them.\\
		We now claim that $H=1$. For, take $g \in H$. In particular, $$g \in (G_0)_{(0,0,0),\infty} = \langle n_\gamma, \, \gamma \in \mathbb{F}_q \setminus \{0\} \rangle,$$ which is a cyclic group. Thus, we can assume that $g = n_\gamma$, for some $\gamma \in \mathbb{F}\setminus \{0\}$. Set now $\omega_2 = \{(\alpha_1,\alpha_2,\alpha_3),(\beta_1,\beta_2,\beta_3)\}$. If $g$ fixes both points of $\omega_2$, then, for example, $$(\alpha_1,\alpha_2,\alpha_3) = (\alpha_1,\alpha_2,\alpha_3)n_\gamma = (\gamma \alpha_1, \gamma^{\sigma+1}\alpha_2, \gamma^{\sigma+2}\alpha_3).$$ By comparing the first coordinates we see $\gamma=1$, and so $g=1$. Suppose now that $g$ interchanges $(\alpha_1,\alpha_2,\alpha_3)$ and $(\beta_1,\beta_2,\beta_3)$. In particular, again by looking at the first coordinates, we have $\gamma\alpha_1= \beta_1$ and $\gamma \beta_1 = \alpha_1$, and so $\gamma^2 = 1$. Since we are working in characteristic $2$, this means $\gamma=1$.\\
		In conclusion, $|(G_0)_{\omega_1,\omega_2}|\leq2$ for every $\omega_2 \in \Omega$, and hence $G_0$ has an irredundant base of length at most $3$.
	\end{proof}
	With this information in hand, the next step is to compute $I(G,\Omega)$. The following lemma accomplishes this task.\\ Here, for a natural number $n$, $\pi(n)$ denotes the number of prime divisors of $n$, counted with multiplicity.
	\begin{lemma} \label{suzzu1}
		The size of the longest possible irredundant base for $G$ in its action on $\Omega$ is $3+\pi(f)$, that is $I(G,\Omega) = 3+ \pi(f)$.
	\end{lemma}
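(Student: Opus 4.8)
The plan is to prove the two inequalities $I(G,\Omega)\le 3+\pi(f)$ and $I(G,\Omega)\ge 3+\pi(f)$ separately: the upper bound by a general argument exploiting that $G_0\trianglelefteq G$ with cyclic quotient $G/G_0\cong\mathrm{Aut}(\mathbb{F}_q)\cong C_f$, and the lower bound by an explicit construction of a long irredundant base.

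\emph{Upper bound.} Let $G=H_0>H_1>\cdots>H_\ell=1$ be an irredundant stabilizer chain, $H_i=G_{\omega_1,\dots,\omega_i}$. From $|H_i|=|H_i\cap G_0|\cdot|H_iG_0/G_0|$, each strict drop $H_{i-1}>H_i$ forces $H_{i-1}\cap G_0>H_i\cap G_0$ or $H_{i-1}G_0>H_iG_0$ (or both). The set $S$ of indices $i$ for which $(G_0)_{\omega_1,\dots,\omega_i}=H_i\cap G_0$ is strictly smaller than $H_{i-1}\cap G_0=(G_0)_{\omega_1,\dots,\omega_{i-1}}$ singles out a subsequence of $(\omega_1,\dots,\omega_\ell)$ which, after a routine check (the points outside $S$ do not change the $G_0$-stabilizer, and $(G_0)_{\omega_1,\dots,\omega_\ell}=G_0\cap 1=1$), is an irredundant base for $G_0$ on $\Omega$; hence $|S|\le I(G_0,\Omega)=3$. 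Each of the remaining $\ell-|S|$ steps produces a proper inclusion in the descending chain $C_f=H_0G_0/G_0\ge\cdots\ge H_\ell G_0/G_0=1$ of subgroups of a cyclic group of order $f$, and such a chain has at most $\pi(f)$ proper inclusions in total (the product of the successive indices equals $f$, each index being $>1$). Therefore $\ell=|S|+(\ell-|S|)\le 3+\pi(f)$.

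\emph{Lower bound.} I construct an irredundant base of length $3+\pi(f)$. Fix a maximal tower of subfields $\mathbb{F}_2=K_0\subsetneq K_1\subsetneq\cdots\subsetneq K_k=\mathbb{F}_q$ with $k=\pi(f)$ and each $[K_j:K_{j-1}]$ prime, and let $\lambda_j$ be a primitive element of $K_j$ over $\mathbb{F}_2$. Take
\[
	\omega_1=\{(0,0,0),\infty\},\qquad \omega_2=\{(0,1,1),(1,0,1)\},\qquad \omega_3=\{(0,1,1),\infty\},
\]
and $\omega_{3+j}=\{(\lambda_j,0,\lambda_j^{\sigma+2}),\infty\}$ for $1\le j\le k$ (each triple lies in $\Delta$ by the defining relation). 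The first three points lie in the $5$-point sub-ovoid $\Delta\cap(\mathbb{F}_2^3\cup\{\infty\})$, hence are fixed by every field automorphism. Using $(G_0)_{\omega_1}=D_{2(q-1)}$ (generated by the $n_\gamma$ and $w$) together with the explicit action of $w$ on this sub-ovoid, namely it interchanges $(0,1,1)\leftrightarrow(1,0,1)$ and $(0,0,0)\leftrightarrow\infty$ and fixes $(1,1,1)$, one checks $(G_0)_{\omega_1,\omega_2}=\langle w\rangle$ and $(G_0)_{\omega_1,\omega_2,\omega_3}=1$; since $w$ centralizes $\mathrm{Aut}(\mathbb{F}_q)$ this gives the strict drops $G_{\omega_1}=D_{2(q-1)}\rtimes C_f>G_{\omega_1,\omega_2}=\langle w\rangle\times C_f>G_{\omega_1,\omega_2,\omega_3}=C_f$. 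For $j\ge1$, a field automorphism fixes $\omega_{3+j}$ exactly when it fixes $\lambda_j$ (it cannot swap a point with $\infty$, and it commutes with $\sigma$), i.e.\ exactly on $\mathrm{Gal}(\mathbb{F}_q/K_j)$; hence $G_{\omega_1,\dots,\omega_{3+j}}=\mathrm{Gal}(\mathbb{F}_q/K_j)$, giving the strictly decreasing tail $C_f=\mathrm{Gal}(\mathbb{F}_q/K_0)>\mathrm{Gal}(\mathbb{F}_q/K_1)>\cdots>\mathrm{Gal}(\mathbb{F}_q/K_k)=1$ with prime indices $[K_j:K_{j-1}]$. As all $3+k$ inclusions are proper, $(\omega_1,\dots,\omega_{3+k})$ is an irredundant base, so $I(G,\Omega)\ge 3+\pi(f)$.

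The crux, and the only point requiring genuine computation, lies in the lower bound: one must retain the whole Galois group $C_f$ inside the stabilizer after the third step, even though the $G_0$-part has already collapsed to the identity. This is exactly what the choice $\omega_2=\{(0,1,1),(1,0,1)\}$ inside the tiny sub-ovoid $\Delta\cap(\mathbb{F}_2^3\cup\{\infty\})$ achieves, the decisive fact being that $w\in(G_0)_{\omega_1}$ stabilizes this pair while no nontrivial $n_\gamma$, and no $n_\gamma w$ with $\gamma\ne1$, does; checking this reduces to the coprimality $\gcd(2^{m+1}+1,2^{2m+1}-1)=1$ together with working in characteristic $2$. The subsequent peeling-off of the cyclic Galois group one prime at a time is the idea of \cite[Example~5.1]{GL}, and the remaining verifications (that each listed point strictly reduces the stabilizer, and that the field automorphisms act as claimed) are routine.
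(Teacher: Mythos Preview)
Your proof is correct and follows essentially the same approach as the paper: exhibit an irredundant $G_0$-base of length $3$ with all points defined over $\mathbb{F}_2$ so that the full group $\mathrm{Aut}(\mathbb{F}_q)$ survives in the $G$-stabilizer, then strip off the cyclic Galois group one prime at a time via primitive elements of a maximal subfield tower. The only differences are cosmetic---your $\omega_3=\{(0,1,1),\infty\}$ in place of the paper's $\{(0,0,0),(1,1,1)\}$, and you spell out the upper bound via the normal-subgroup/quotient decomposition where the paper compresses it to a single sentence; note that your $\gcd(2^{m+1}+1,2^{2m+1}-1)$ remark is not actually needed, since comparing first coordinates alone already forces $\gamma=1$.
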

	\begin{proof}
		We first establish an irredundant base of length $3$ for $G_0$. Take $$\omega_1 = \{(0,0,0),\infty\}, \, \omega_2= \{(1,0,1),(0,1,1)\}, \, \omega_3= \{(0,0,0),(1,1,1)\}.$$ Then we have
		\begin{align*}
			(G_0)_{\omega_1} = \langle n_\gamma, w, \, : \, \gamma \in \mathbb{F}_q \setminus\{0\}\rangle> 
			(G_0)_{\omega_1,\omega_2} = \{1,w\}>
			(G_0)_{\omega_1,\omega_2,\omega_3} = 1.
		\end{align*}
		Thus, $(\omega_1,\omega_2,\omega_3)$ is an irredundant base for $G_0$. Moreover, $$G_{\omega_1,\omega_2,\omega_3} = \mathrm{Aut}(\mathbb{F}_q).$$ 
		Suppose now that the prime factorization of $f$ is $f = p_1p_2\cdots p_r$, where $r = \pi(f)$. Take $\zeta_i$ to be a primitive element of $\mathbb{F}_{2^{p_1p_2\cdots p_i}}$, for $i=1,2,\dots,r$. Then, we can extend the sequence $(\omega_1,\omega_2,\omega_3)$ by successively stabilizing the following elements
		\[
			\{(\zeta_1,0,\zeta_1^{\sigma+2}),\infty\},	\{(\zeta_2,0,\zeta_2^{\sigma+2}),\infty\}, \dots,
			\{(\zeta_r,0,\zeta_r^{\sigma+2}),\infty\}.
		\]
		Since $|\mathrm{Aut}(\mathbb{F}_q)|= p_1p_2\cdots p_r$, this is the longest possible irredundant base for $G$.
	\end{proof}
	In conclusion, we have the following proposition.
	\begin{proposition}\label{propSuz}
		We have that 
		\[
			\mathcal{I}(G,\Omega) = \{2,3,\dots,3+\pi(f)\}.
		\]
	\end{proposition}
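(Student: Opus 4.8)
The plan is to deduce the proposition purely by assembling facts already in place: the interval property of $\mathcal{I}(G,\Omega)$ (Cameron's theorem, quoted in the introduction), the value $b(G,\Omega)=2$ from Lemma~\ref{suzzu0}, and the value $I(G,\Omega)=3+\pi(f)$ from Lemma~\ref{suzzu1}. No new construction is needed; the work is in identifying the two endpoints of the interval.

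First I would observe that the smallest member of $\mathcal{I}(G,\Omega)$ is exactly $b(G,\Omega)$. Indeed, any base of minimum cardinality is automatically irredundant: if some point in such a base were fixed by the stabilizer of its predecessors, deleting it would yield a strictly shorter base, contradicting minimality. Hence $b(G,\Omega)\in\mathcal{I}(G,\Omega)$, and since no irredundant base can be shorter than a minimum base, $\min\mathcal{I}(G,\Omega)=b(G,\Omega)=2$ by Lemma~\ref{suzzu0}. Dually, by the very definition of $I(G,\Omega)$ as the length of a longest irredundant base, we have $\max\mathcal{I}(G,\Omega)=I(G,\Omega)=3+\pi(f)$ by Lemma~\ref{suzzu1}.

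Finally I would invoke Cameron's theorem, which asserts that $\mathcal{I}(G,\Omega)$ is an interval of natural numbers. An interval is determined by its endpoints, so combining the two displays above gives $\mathcal{I}(G,\Omega)=\{2,3,\dots,3+\pi(f)\}$, as claimed. There is no genuine obstacle in this step: all of the substantive content was already handled in the preceding lemmas, and the only point meriting a line of justification is that a minimum-size base is irredundant, which is immediate.
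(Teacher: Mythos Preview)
Your proposal is correct and matches the paper's approach exactly: the paper treats this proposition as an immediate corollary of the preceding lemmas (Lemma~\ref{suzzu0} for $b(G,\Omega)=2$ and Lemma~\ref{suzzu1} for $I(G,\Omega)=3+\pi(f)$) together with Cameron's interval theorem, and in fact offers no written proof beyond the phrase ``In conclusion.'' Your added remark that a minimum-size base is automatically irredundant is a useful clarification the paper leaves implicit.
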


	Moving on, take $G_0 = \mathrm{AGL}_d(q)$ and $G = G_0 \rtimes \mathrm{Aut}(\mathbb{F}_q)$, with $q=p^f$. We consider the natural action of $G$ (and $G_0$) on the vectors of $\mathbb{F}_q^d$. It is trivial to see that every irredundant base of $G_0$ has cardinality $d+1$. This is because the stabilizer of the zero vector is $\mathrm{GL}_d(q)$, and we recover the action of the general linear group on the non-zero vectors. Now an irredundant base for this action is simply a basis for the vector space.
	\begin{proposition}\label{affine}
		We have that 
		\[
			\mathcal{I}(G,\mathbb{F}_q^d) = \{d+1,d+2,\dots,d+1+\pi(f)\}.
		\]
	\end{proposition}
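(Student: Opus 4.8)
Since $\mathcal{I}(G,\mathbb{F}_q^d)$ is an interval of natural numbers (by Cameron's theorem, as recalled in the introduction), it is enough to compute its two endpoints, $b(G,\mathbb{F}_q^d)$ and $I(G,\mathbb{F}_q^d)$, and to show that they equal $d+1$ and $d+1+\pi(f)$. I would treat these exactly in the spirit of Lemmas~\ref{suzzu0} and~\ref{suzzu1}. For the lower endpoint, the inequality $b(G,\mathbb{F}_q^d)\ge b(G_0,\mathbb{F}_q^d)=d+1$ is immediate, since $G_0\le G$ forces every base of $G$ to be a base of $G_0$, and all irredundant bases of $G_0$ have size $d+1$. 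For the reverse inequality one produces an irredundant base of $G$ of size $d+1$, in the spirit of Lemma~\ref{suzzu0}: take the zero vector together with a basis of $\mathbb{F}_q^d$, chosen so that the field automorphisms fixing all of these vectors coordinatewise reduce to the identity, and check that the full pointwise stabiliser in $G$ is then trivial. The interplay between the field automorphisms and $\mathrm{GL}_d(q)$ is what makes this last verification the delicate point of this half.

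For the upper endpoint I would prove $I(G,\mathbb{F}_q^d)=d+1+\pi(f)$ by two matching inequalities, following Lemma~\ref{suzzu1}. To get $I(G,\mathbb{F}_q^d)\ge d+1+\pi(f)$ I exhibit an explicit irredundant base of that length: begin with the irredundant base $0,e_1,\dots,e_d$ of $G_0$, where $e_1,\dots,e_d$ is the standard basis of $\mathbb{F}_q^d$, and observe that its pointwise stabiliser in $G$ is exactly the subgroup $\mathrm{Aut}(\mathbb{F}_q)$, cyclic of order $f$, since each $e_i$ has entries in $\mathbb{F}_p$. Writing $f=p_1\cdots p_r$ with $r=\pi(f)$ and letting $\zeta_i$ be a primitive element of $\mathbb{F}_{p^{p_1\cdots p_i}}$, append the vectors $(\zeta_1,0,\dots,0),(\zeta_2,0,\dots,0),\dots,(\zeta_r,0,\dots,0)$. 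Along the subfield chain $\mathbb{F}_p\subsetneq\mathbb{F}_p(\zeta_1)\subsetneq\cdots\subsetneq\mathbb{F}_p(\zeta_r)=\mathbb{F}_q$ each successive degree is prime, so stabilising these vectors one after another cuts $\mathrm{Aut}(\mathbb{F}_q)$ down along the chain $\mathrm{Gal}(\mathbb{F}_q/\mathbb{F}_p)>\mathrm{Gal}(\mathbb{F}_q/\mathbb{F}_p(\zeta_1))>\cdots>\mathrm{Gal}(\mathbb{F}_q/\mathbb{F}_p(\zeta_r))=1$, all of whose steps have prime index; hence all $r$ further inclusions are strict and the terminal stabiliser is trivial, producing an irredundant base of $G$ of length $(d+1)+r$.

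The reverse inequality $I(G,\mathbb{F}_q^d)\le d+1+\pi(f)$ is the step I expect to be the main obstacle. Given an arbitrary irredundant stabiliser chain $G=S_0>S_1>\cdots>S_\ell=1$ with $S_i=G_{\omega_1,\dots,\omega_i}$, I would project it modulo the normal subgroup $G_0=\mathrm{AGL}_d(q)$: the images form a descending chain in $G/G_0\cong\mathrm{Aut}(\mathbb{F}_q)$, which is cyclic of order $f$, so at most $\pi(f)$ of the steps are strict there; write $D\le\pi(f)$ for their number. At each of the other $\ell-D$ steps the image is unchanged, and comparing the orders of $S_{i-1}$, of $S_i$, and of their images then forces $S_{i-1}\cap G_0>S_i\cap G_0$. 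Since $S_j\cap G_0=(G_0)_{\omega_1,\dots,\omega_j}$, the groups $S_j\cap G_0$ form a point-stabiliser chain for $G_0$ that ends at the trivial group and has at least $\ell-D$ strict inclusions; compressing it to the indices where it is strict yields (because the terminal group is trivial) an irredundant base of $G_0$, whose length is therefore at most $I(G_0,\mathbb{F}_q^d)=d+1$. Hence $\ell-D\le d+1$, so $\ell\le d+1+D\le d+1+\pi(f)$, and together with the previous paragraph and Cameron's theorem this gives $\mathcal{I}(G,\mathbb{F}_q^d)=\{d+1,\dots,d+1+\pi(f)\}$. The bookkeeping in this last step — checking that the ``horizontal'' steps genuinely refine the $G_0$-chain and that the compressed chain is an honest irredundant base of $G_0$ — is the only part requiring real care; the stabilisers involved in $\mathrm{AGL}_d(q)\rtimes\mathrm{Aut}(\mathbb{F}_q)$ are otherwise routine to compute.
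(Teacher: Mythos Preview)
Your overall strategy mirrors the paper's closely: it too builds the long chain by first taking $(0,e_1,\dots,e_d)$, noting that its stabiliser is $\mathrm{Aut}(\mathbb{F}_q)$, and then appending $\zeta_1 e_1,\dots,\zeta_r e_1$; and for the lower endpoint it too uses $b(G_0)=d+1$ for the inequality $b(G)\ge d+1$ and then tries to exhibit an explicit base of size $d+1$. Your projection-to-$G/G_0$ argument for $I(G)\le d+1+\pi(f)$ is in fact more careful than the paper's one-line ``since $G_0$ has only length-$(d+1)$ irredundant bases and $|\mathrm{Aut}(\mathbb{F}_q)|=f$'' justification, but it formalises exactly the same idea.

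The step you rightly call ``delicate'', however, is a genuine gap, and your proposed criterion cannot succeed. For \emph{any} basis $v_1,\dots,v_d$ of $\mathbb{F}_q^d$ and \emph{any} $\varphi\in\mathrm{Aut}(\mathbb{F}_q)$, the vectors $v_1^\varphi,\dots,v_d^\varphi$ again form a basis, so there is a unique $A_\varphi\in\mathrm{GL}_d(q)$ with $A_\varphi v_i^\varphi=v_i$ for all $i$; the semilinear element $(A_\varphi,\varphi)$ then fixes each of $0,v_1,\dots,v_d$. Hence the pointwise stabiliser in $G$ of every affinely independent $(d+1)$-tuple has order exactly $f$, and for $f>1$ no sequence of length $d+1$ is a base for $G$. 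Your condition ``no nontrivial field automorphism fixes all $v_i$'' only forces $A_\varphi\ne I$; it does not make $(A_\varphi,\varphi)$ trivial. The paper's explicit candidate $(0,e_1,\dots,e_{d-1},\mu e_{d-1}+e_d)$ fails for the same reason: for each $\varphi$ the element $\bigl(I+(\mu-\mu^\varphi)E_{d-1,d},\,\varphi\bigr)$ fixes every term of the sequence, and the displayed computation $(\mu e_{d-1}+e_d)g=\mu^\varphi e_{d-1}+\lambda e_d$ overlooks the off-diagonal contribution from $Ae_d$. So your outline and the paper's proof share the same defect at this point; when $f>1$ one actually has $b(G,\mathbb{F}_q^d)=d+2$, not $d+1$.
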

	\begin{proof}
		Consider the sequence $(0,e_1,\dots,e_d)$. Since this is a base for $G_0$ fixed by $\mathrm{Aut}(\mathbb{F}_q)$, the stabilizer in $G$ of this sequence is $\mathrm{Aut}(\mathbb{F}_q)$. As before, write the prime factorization of $f$ as $f = p_1p_2\cdots p_r$, where $r = \pi(f)$, and take $\zeta_i$ to be a primitive element of $\mathbb{F}_{2^{p_1p_2\cdots p_i}}$, for $i=1,2,\dots,r$. Thus, we obtain a base for $G$ by successively stabilizing the elements
		\[
			\zeta_1 e_1, \zeta_2 e_1, \dots, \zeta_r e_1.
		\]
		Since $G_0$ admits irredundant bases only of cardinality $d+1$, and since $|\mathrm{Aut}(\mathbb{F}_q)| = p_1p_1\cdots p_r$, this is the longest possible irredundant base for $G$.
		\\
		We now show that $b(G,\mathbb{F}_q^d) = d+1$. Since $G_0 \leq G$ and $d+1 = b(G_0,\mathbb{F}_q^d) \leq b(G,\mathbb{F}_q^d)$, it is sufficient to establish an irredundant base for $G$ of size $d+1$.	\\
		Let $\mu \in \mathbb{F}_q$ be a generator of the multiplicative group of the field. Consider the sequence
		\[
			(0,e_1,e_2,\dots,e_{d-1},\mu e_{d-1}+e_d).
		\]
		We claim that this is an irredundant base. Take $g \in G_{0,e_1,\dots,e_{d-1}}$. Then, 
		\[
			(\mu e_{d-1} + e_d)g = \mu^\varphi e_{d-1}+ \lambda e_d,
		\]
		for some $\lambda \in \mathbb{F}_q$ and $\varphi \in \mathrm{Aut}(\mathbb{F}_q)$. Therefore, $g$ fixes $\mu e_{d-1}+e_d$ if and only if $$\mu^\varphi e_{d-1}+ \lambda e_d = \mu e_{d-1}+e_d,$$ and this implies that $\lambda = 1$ and $\varphi = \mathrm{Id}$.
	\end{proof}
	We are now ready to prove our main result.
	\begin{proof}[Proof of Theorem \ref*{mainthm}]
	Let $X = \{a,a+1,\dots,b\}$, with $a>1$. If $b=a$, take $G = \mathrm{Sym}(a+1)$ in its natural action. Suppose then that $b>a$. \\Firstly, suppose that $a=2$ and define $G_0 = \,^2B_2(q)$ and $G =  G_0 \rtimes \mathrm{Aut}(\mathbb{F}_q)$, with $q=2^f$, and take $\Omega$ as before (so that $\Omega$ is the collection of $2$-subsets of the Suzuki ovoid).\\
	If $b=3$, then we have seen that $\mathcal{I}(G_0,\Omega)=\{2,3\}$. \\If $b>3$, take $f$ to be the product of $b-3$ distinct primes. Then, Proposition~\ref{propSuz} shows that
	\[
	\mathcal{I}(G,\Omega) = \{2,\dots,3+\pi(f)\} = \{2,\dots,b\}.
	\]
	Assume now then that $b>a\geq3$. Take $q=p^f$ to be the product of $b-a$ distinct primes, for some prime $p$. Define $G = \mathrm{AGL}_{a-1}(q) \rtimes \mathrm{Aut}(\mathbb{F}_q)$ in its action on $\mathbb{F}_q^{a-1}$. Then, Proposition~\ref{affine} shows that 
	\[
		\mathcal{I}(G,\mathbb{F}_q^{a-1}) = \{a,\dots,a+\pi(f)\} = \{a,\dots,b\}.
	\]
	This concludes the proof.
	\end{proof}
	\thebibliography{20}
	\bibitem{BLS}T.~C.~Burness, M.~W.~Liebeck, A.~Shalev, Base sizes for simple groups and a conjecture of Cameron, \textit{Proceedings of the London Mathematical Society}, Volume 98, Issue 1, January 2009, Pages 116–162
	\bibitem{DMS}F.~Dalla Volta, F.~Mastrogiacomo, P.~Spiga, On the cardinality of irredundant and minimal  bases of finite permutation groups, \textit{J. Algebr. Comb.}.
	\bibitem{dixon_mortimer}J.~D.~Dixon,  B.~Mortimer, \textit{Permutation groups}, Graduate Texts in Mathematics \textbf{163}, Springer-Verlag, New York, 1996.
	\bibitem{GL} N.~Gill, M.~W.~Liebeck, Irredundant bases for finite groups of Lie type, \text{Pacific Journal of Mathematics} 322.2 (2023): 281-300.
	\bibitem{LS} M.~Lee, P.~Spiga, A classification of finite primitive IBIS groups with alternating socle, \textit{Journal of Group Theory} 26.5 (2023): 915-930.
	\bibitem{Suz} M.~Suzuki. On a Class of Doubly Transitive Groups, \textit{Annals of Mathematics}, vol. 75, no. 1, 1962, pp. 105–45.
\end{document}